\def\tluste#1{\protect{\textrm{\boldmath $#1$}}}
\newcommand{\vr}[1]{{{#1}}}
\newcommand{\mace}[1]{{{#1}}}
\newcommand{\mna}[1]{{\mathcal{#1}}}
\newcommand{\omace}[1]{\mbox{$\overline{\mace{#1}}$}} 
\newcommand{\umace}[1]{\mbox{$\underline{\mace{#1}}$}} 
\newcommand{\imace}[1]{\tluste{#1}} 		
\def\Mid#1{#1_c}
\newcommand{\ovr}[1]{\mbox{$\overline{\vr{#1}}$}} 
\newcommand{\uvr}[1]{\mbox{$\underline{\vr{#1}}$}}
\newcommand{\onum}[1]{\mbox{$\overline{{#1}}$}} 
\newcommand{\unum}[1]{\mbox{$\underline{{#1}}$}}
\newcommand{\inum}[1]{\mbox{$\tluste{#1}$}} 
\newcommand{\R}[0]{{\mathbb{R}}}
\def\eps{{\varepsilon}}
\newcommand{\mmid}[0]{;\,}		
\newcommand{\seznam}[1]{{\{1, \ldots, {#1}\}}}
\def\clqq{``}
\def\crqq{''}
\def\quo#1{\clqq{}#1\crqq{}}  
\DeclareMathOperator{\rank}{rank}
\def\xxx#1{\fbox{\sc #1}}
\newtheorem{theorem}{Theorem}
\newtheorem{proposition}{Proposition}
\theoremstyle{definition}
\newtheorem{example}{Example}
\newtheorem{remark}{Remark}
\begin{document}

\title{Eigenvalues of symmetric tridiagonal interval matrices revisited}

\author{
  Milan Hlad\'{i}k\footnote{
Charles University, Faculty  of  Mathematics  and  Physics,
Department of Applied Mathematics, 
Malostransk\'e n\'am.~25, 11800, Prague, Czech Republic, 
e-mail: \texttt{milan.hladik@matfyz.cz}
}
}

\date{July 9, 2018}
\maketitle

\begin{abstract}
In this short note, we present a novel method for computing exact lower and upper bounds of eigenvalues of a symmetric tridiagonal interval matrix. Compared to the known methods, our approach is fast, simple to present and to implement, and avoids any assumptions. Our construction explicitly yields those matrices for which particular lower and upper bounds are attained.
\end{abstract}

\textbf{Keywords:}\textit{ Eigenvalue; Tridiagonal matrix; Interval matrix.}

\section{Introduction}

Consider a symmetric tridiagonal matrix of size $n$
\begin{align*}
A=\begin{pmatrix}
 a_1  & b_2  &  0  & \dots  & 0 \\
 b_2  & a_2  & b_3 & \ddots & \vdots \\
 0    & b_3  & a_3 & \ddots & 0 \\
\vdots&\ddots&\ddots& \ddots & b_n \\
 0    &\dots & 0   & b_n & a_n
\end{pmatrix}.
\end{align*}
and denote by $\lambda_1(A)\geq\dots\geq\lambda_n(A)$ its eigenvalues. Assume that the entries of $A$ are not known precisely and the only information that we have is that $a_i$ comes from a given interval $\inum{a}_i=[\unum{a}_i,\onum{a}_i]$, $i=1,\dots,n$, and $b_i$ comes from a given interval $\inum{b}_i=[\unum{b}_i,\onum{b}_i]$, $i=2,\dots,n$.

By $\imace{A}=[\umace{A},\omace{A}]$ we denote the corresponding interval matrix, that is, the set of all matrices with $a_i\in\inum{a}_i$ and  $b_i\in\inum{b}_i$.
By $\Mid{A}\coloneqq\frac{1}{2}(\umace{A}+\omace{A})$ we denote its midpoint. Next, 
\begin{align*}
\inum{\lambda}_i=[\unum{\lambda}_i,\onum{\lambda}_i]
\coloneqq\{\lambda_i(A)\mmid A\in\imace{A}\}
\end{align*}
stands for the corresponding eigenvalue sets. It was shown in \cite{HlaDan2010} that they form real compact intervals. The problem investigated in this paper is to determine their end-points. We focus on the upper end-points $\onum{\lambda}_i$s since the lower ones can be determined analogously by the reduction $A\mapsto -A$.

Characterization of the extremal eigenvalues  $\onum{\lambda}_1$ and $\unum{\lambda}_n$ of a general symmetric interval matrix is due to Hertz \cite{Her1992} by a formula involving computation of $2^n$ matrices. A partial characterization of the intermediate eigenvalue intervals was done in \cite{Dei1991a,HlaDan2011c}. Due to NP-hardness of computing or even tightly approximating the eigenvalue sets \cite{Hla2015b,Roh1994}, there were developed various outer and inner approximation methods \cite{AhnMoor2006,HlaDan2010,HlaDan2011c,HlaDan2011b,Kol2010,Len2014,QiuChe1996,Roh1996c}. The tridiagonal case was particularly investigated by Commer\c{c}on \cite{Com1994}, who proposed a method for calculating the exact eigenvalue bounds based on the Sturm algorithm. This method, however, suffers from time complexity analysis and relies too much on the particular Sturm algorithm. Our aim is to have a finite reduction to real cases, which can be solved by any eigenvalue method for tridiagonal matrices. Another author investigating tridiagonal interval matrices was Jian \cite{Jia2017}.
He proposed a method for computing the extremal eigenvalues by a reduction to four real cases, and he also inspected tridiagonal interval Toeplitz matrices. Our approach Generalizes the result and enables to calculate ranges of all eigenvalue sets under eigenvector sign invariancy condition.

\section{Preliminaries}\label{sPre}

Throughout this paper, inequalities such as \quo{$\geq$} are applied entrywise. In particular, $A\geq0$ means that $A$ is entrywise nonnegative.

\begin{proposition}\label{propPreNonneg}
Without loss of generality, we can assume that ${A}\geq0$.
\end{proposition}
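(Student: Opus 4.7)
The plan is to reduce the problem to the nonnegative case in two independent steps. First I would make the off-diagonal intervals $\inum{b}_i$ nonnegative by exploiting the fact that a symmetric tridiagonal matrix is orthogonally similar, via a diagonal sign matrix, to the one obtained by replacing each $b_i$ with $|b_i|$. Second, I would shift the spectrum by adding a large multiple of the identity so that the diagonal intervals become nonnegative as well.

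For the first step, given any $A\in\imace{A}$, set $d_1=1$ and recursively $d_{k+1}=\sgn(b_{k+1})\,d_k$ (with the convention $\sgn(0)=1$), and let $D=\diag(d_1,\ldots,d_n)$. Since $D^2=I$, the matrix $DAD$ is orthogonally similar to $A$, so $\lambda_i(DAD)=\lambda_i(A)$ for all $i$; moreover a direct computation gives $(DAD)_{i,i+1}=d_id_{i+1}b_{i+1}=|b_{i+1}|$, while the diagonal entries are unchanged. Hence the set $\{DAD\mmid A\in\imace{A}\}$ has the same eigenvalue intervals $\inum{\lambda}_i$ as $\imace{A}$. This set is itself a symmetric tridiagonal interval matrix: as $A$ varies over $\imace{A}$, each diagonal entry independently covers $\inum{a}_i$, and each off-diagonal entry independently covers $\{|b|\mmid b\in\inum{b}_i\}$. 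The latter image is the closed nonnegative interval $[\Mig{\inum{b}_i},\Mag{\inum{b}_i}]$, with $\Mig{\inum{b}_i}=0$ whenever $0\in\inum{b}_i$, by continuity of $|\cdot|$ on a connected set. Thus we obtain an equivalent interval matrix whose off-diagonal intervals are nonnegative.

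For the second step, I would use the elementary identity $\lambda_i(A+\alpha I)=\lambda_i(A)+\alpha$ for any $\alpha\in\R$. Choosing $\alpha$ large enough (for instance, $\alpha\geq-\min_i\unum{a}_i$) makes every diagonal interval $\inum{a}_i+\alpha$ nonnegative, while the eigenvalue intervals are merely translated by $\alpha$ and can be translated back at the end. Combining the two steps yields an interval matrix $\imace{A}'\geq0$ whose eigenvalue intervals are $\inum{\lambda}_i+\alpha$.

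The main conceptual subtlety, rather than any difficult computation, is to verify that the first step is globally consistent: the diagonal sign matrix $D$ depends on the particular $A$, so we are not conjugating the whole interval matrix by a fixed similarity. What we really need is only the set equality of eigenvalue ranges, together with the fact that the pointwise image $\{DAD\mmid A\in\imace{A}\}$ is again representable as an interval matrix because the entries $a_i$ and $|b_i|$ vary independently. Once this is observed, the proof reduces to the two routine computations described above.
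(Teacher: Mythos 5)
Your proposal is correct and follows essentially the same route as the paper: a spectral shift $A\mapsto A+\alpha I_n$ for the diagonal, and a diagonal sign-similarity to make the off-diagonal entries nonnegative, the only difference being that you conjugate by the full sign matrix $D$ in one step while the paper flips one $b_i$ at a time by explicitly modifying the eigenvector. Your treatment of the interval-level consistency (the image of $\inum{b}_i$ under $|\cdot|$ being $[0,\max\{-\unum{b}_i,\onum{b}_i\}]$ in the mixed-sign case, with entries varying independently) matches exactly the transformation the paper records in the discussion immediately after the proposition.
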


\begin{proof}
The transformation $A\mapsto A+\alpha I_n$ increases all eigenvalues of $A$ by the amount of $\alpha$. So for any $\alpha\geq\max_i\{-a_i\}$ this transformation yields a matrix with a nonnegative diagonal. Thus, we can assume that $a_i\geq0$ for every $i$.

Suppose now there is $i$ such that $b_i<0$. Let $\lambda$ be any eigenvalue of $A$ and $x$ a corresponding eigenvector, that is, $Ax=\lambda x$. Let $A'$ be the matrix resulting from $A$ by putting $b'_i=-b_i$, and let $x'=(-x_1,\dots,-x_{i-1},x_i,\dots,x_n)^T$. Then for $k>i$ we have
$$
(A'x')_k=(Ax)_k=(\lambda x)_k=(\lambda x')_k.
$$
For $k<i-1$ we have
$$
(A'x')_k=(-Ax)_k=(-\lambda x)_k=(\lambda x')_k.
$$
The remaining two cases are:
\begin{align*}
(A'x')_{i-1}
=b'_{i-1}x'_{i-2}+a'_{i-1}x'_{i-1}+b'_ix'_i
=-b_{i-1}x_{i-2}-a_{i-1}x_{i-1}-b_ix_i
=-\lambda x_{i-1}
=\lambda x'_{i-1},
\end{align*}
and
\begin{align*}
(A'x')_{i}
=b'_{i}x'_{i-1}+a'_{i}x'_{i}+b'_{i+1}x'_{i+1}
=b_{i}x_{i-1}+a_{i}x_{i}+b_{i+1}x_{i+1}
=\lambda x_{i}
=\lambda x'_{i}.
\end{align*}
Thus, $A'$ has the same eigenvalues as $A$, and the eigenvectors of $A$ can easily be derived from those of $A'$. By repeating this process, we obtain all $b_i$s nonnegative.
\end{proof}


We can therefore assume that $\umace{A}\geq0$ for the interval matrix $\imace{A}$. Nonnegativity of the diagonal can be achieved by the transformation $\inum{a}_i\mapsto \inum{a}_i+\alpha$ with $\alpha\coloneqq\max_i\{-\unum{a}_i\}$, and nonnegativity of the remaining entries by the transformation
\begin{align*}
\inum{b}_i\mapsto 
\begin{cases}
\inum{b}_i, & \mbox{if } \unum{b}_i\geq0,\\
-\inum{b}_i, & \mbox{if } \onum{b}_i\leq0,\\
[0,\max\{-\unum{b}_i,\onum{b}_i\}], & \mbox{otherwise.}
\end{cases}
\end{align*}


We will assume throughout the paper that $\onum{b}_i>0$ for all $i=2,\dots,n$; otherwise $\imace{A}$ is block diagonal and we split the problem into the subproblems corresponding to the diagonal blocks of $\imace{A}$.

\begin{proposition}\label{propSimple}
Suppose that $\unum{b}_i>0$ for all $i=2,\dots,n$.
Then all eigenvalues of every $A\in\imace{A}$ are simple.
\end{proposition}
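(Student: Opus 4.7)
The plan is to prove that for every $A \in \imace{A}$ and every eigenvalue $\lambda$ of $A$, the eigenspace $\ker(A - \lambda I_n)$ is one-dimensional. Since $A$ is real symmetric, it is diagonalizable, so geometric and algebraic multiplicities coincide; this gives simplicity of every eigenvalue.

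The key tool is the three-term recurrence arising from the eigenvalue equation. First I would fix $A \in \imace{A}$, an eigenvalue $\lambda$, and a corresponding eigenvector $x \in \R^n$. Writing out $Ax = \lambda x$ row by row, the first row gives
\begin{align*}
b_2 x_2 = (\lambda - a_1) x_1,
\end{align*}
and row $k$ for $2 \le k \le n-1$ gives
\begin{align*}
b_{k+1} x_{k+1} = (\lambda - a_k) x_k - b_k x_{k-1}.
\end{align*}
Because every $b_i \ge \unum{b}_i > 0$, we can solve each of these equations for $x_{k+1}$ in terms of $x_{k}$ and $x_{k-1}$. Hence, once $x_1$ is prescribed, the components $x_2, \dots, x_n$ are uniquely determined. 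This forces $\dim \ker(A - \lambda I_n) \le 1$, which together with the spectral theorem yields algebraic multiplicity one.

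There is essentially no obstacle: the argument is the standard unreduced-tridiagonal fact, and the hypothesis $\unum{b}_i > 0$ is exactly what is needed to invert the recurrence at each step. The only thing to note is that $x_1 \ne 0$ must hold for any eigenvector (otherwise the recurrence propagates zeros and gives $x = 0$), but this is not actually needed for the simplicity conclusion itself.
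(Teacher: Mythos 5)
Your proof is correct: the three-term recurrence shows any eigenvector is determined by its first entry, so each eigenspace is at most one-dimensional, and symmetry (equality of geometric and algebraic multiplicities) then gives simplicity. The paper itself proves the proposition merely by citing the well-known fact from Parlett that an unreduced symmetric tridiagonal matrix has simple eigenvalues, so your argument is just the standard proof of that cited fact written out in full, not a genuinely different route.
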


\begin{proof}
It is obvious since it is known that a symmetric tridiagonal matrix has simple eigenvalues provided off-diagonal elements are nonzero \cite{Par1998}.
\end{proof}

\section{Sign invariancy case}

We say that eigenvectors of $\imace{A}$ are \emph{sign invariant} \cite{DeiRoh1994,Roh1993b} if to each eigenvalue $\lambda_i(A)$ we can associate an eigenvector $x^i(A)$ such that the signs of the entries of $x^i(A)$ are constant for $A\in\imace{A}$.
In this section, we assume that sign invariancy is satisfied.

The derivative of a simple eigenvalue $\lambda$ of a symmetric $A$ with respect to $a_{ij}$ is equal to $x_ix_j$, where $x$, $\|x\|_2=1$, is the corresponding eigenvector. The derivative is nonnegative with respect to the diagonal entries of $A$, so the largest eigenvalues of $\imace{A}$ are attained for $a_i\coloneqq\ovr{a}_i$. 
Notice that similar result holds for general symmetric interval matrices, too \cite{HlaDan2010,Kol2007}.

Due to sign invariancy of eigenvectors, we can easily determine also $b_i$-s. Let $\lambda_k$ be the $k$th largest eigenvalue of $\Mid{A}$ and $x$ the corresponding eigenvector. 
If $x_ix_{i+1}>0$, then $\onum{\lambda}_k$ is attained for $b_i=\onum{b}_i$. Otherwise, it is attained for $b_i=\unum{b}_i$.
In particular, from the Perron theory and properties of nonnegative matrices, we have that $\onum{\lambda}_1$ is attained for $A\coloneqq\omace{A}$.
The following proposition summarizes the result.

\begin{proposition}\label{propAttain}
$\onum{\lambda}_k$ is attained for
\begin{align*}
a_i\coloneqq\ovr{a}_i,\quad
b_i\coloneqq\begin{cases}
\onum{b}_i & \mbox{if }x_ix_{i+1}>0,\\
\unum{b}_i & \mbox{otherwise,}
\end{cases}
\end{align*}
where $x$ is the eigenvector of $\Mid{A}$ corresponding to $\lambda_k(\Mid{A})$.
\end{proposition}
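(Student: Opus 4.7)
The plan is to invoke the classical first-order eigenvalue perturbation formula in combination with the sign-invariancy hypothesis. Fix any $A\in\imace{A}$; by Proposition~\ref{propSimple} (or a limiting argument when some $\unum{b}_i$ vanishes) the eigenvalue $\lambda_k(A)$ is simple, so the map $A\mapsto\lambda_k(A)$ is differentiable at $A$, and the partial derivatives reduce to products of eigenvector components:
\begin{align*}
\frac{\partial \lambda_k}{\partial a_i}=x_i^2,\qquad
\frac{\partial \lambda_k}{\partial b_i}=2\, x_{i-1} x_i,
\end{align*}
where $x$, $\|x\|_2=1$, denotes the unit eigenvector of $A$ associated with $\lambda_k$.

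By the sign-invariancy hypothesis, $\sgn(x_j)$ is constant as $A$ ranges over $\imace{A}$, and in particular it agrees with the sign pattern of the eigenvector of $\Mid{A}$ corresponding to $\lambda_k(\Mid{A})$. Consequently the signs of the partial derivatives above are constant on the whole box: the diagonal derivative is everywhere $\geq 0$, while the sign of $2\, x_{i-1} x_i$ is pinned to the sign observed at the midpoint.

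Since each partial derivative keeps its sign on $\imace{A}$, the function $\lambda_k$ is coordinatewise monotone in every parameter, and such a continuous function attains its maximum over a box at the corner reached by pushing every variable to its favorable endpoint. This gives $a_i=\onum{a}_i$ for every $i$, and $b_i=\onum{b}_i$ or $b_i=\unum{b}_i$ according to whether $x_{i-1} x_i$ is positive or negative, which (modulo the indexing of $b_i$) is precisely the matrix in the proposition. No backtracking is needed during this monotone move, so the corresponding matrix is actually the global maximizer rather than merely a stationary point.

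The main technical obstacle is guaranteeing that $\lambda_k$ stays simple uniformly over $\imace{A}$, so that the gradient formula is valid throughout. When $\unum{b}_i>0$ for all $i$ this is exactly Proposition~\ref{propSimple}; the general case can be reduced to it by replacing each $\unum{b}_i=0$ with $\eps>0$ inside the box, applying the result on the shrunken box, and letting $\eps\to 0$, relying on the continuous dependence of the eigenvalues on the matrix entries.
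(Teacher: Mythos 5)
Your proposal is correct and follows essentially the same route as the paper, which justifies Proposition~\ref{propAttain} by the very same first-order perturbation argument (derivative of a simple eigenvalue equals a product of eigenvector entries, sign invariancy fixes the sign of each partial derivative over the box, hence coordinatewise monotonicity pushes every parameter to its favorable endpoint), merely stated informally in the text preceding the proposition rather than in a proof environment. Your explicit treatment of simplicity via Proposition~\ref{propSimple} and the $\eps$-limiting argument for $\unum{b}_i=0$, as well as writing the derivative as $2x_{i-1}x_i$ consistent with where $b_i$ actually sits in the matrix, only makes the same argument slightly more careful.
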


\begin{remark}\label{remInner}
Notice that provided the problem is not sign invariant, then the eigenvalues computed by Proposition~\ref{propAttain} give an inner estimation of the eigenvalue intervals $\inum{\lambda}_1,\dots,\inum{\lambda}_n$. That is, we have intervals $\inum{\mu}_1,\dots,\inum{\mu}_n$ satisfying $\inum{\mu}_i\subseteq\inum{\lambda}_i$ for every $i=1,\dots,n$, with equality under sign invariancy.
\end{remark}

The resulting method is displayed in Algorithm~\ref{algGen} for computing the right end-points of the eigenvalue intervals; the left end-point are computed analogously.

\begin{algorithm}[t]
\caption{Computation of $\onum{\lambda}_1,\dots,\onum{\lambda}_n$ under sign invariancy.\label{algGen}}
\begin{algorithmic}[1]
\ENSURE{$\umace{A}\geq0$ }
\STATE
compute the eigenvalues $\lambda_1,\dots,\lambda_n$ and the corresponding eigenvectors $x^1,\dots,x^n$ of $\Mid{A}$
\STATE
for $i=1,\dots,n$ put $a_i\coloneqq\ovr{a}_i$ 
\FOR{$k=1,\ldots,n$}
\STATE
for $i=2,\dots,n$ put $b_i\coloneqq\ovr{b}_i$ if $x^k_ix^k_{i+1}>0$ and $b_i\coloneqq\uvr{b}_i$ otherwise
\STATE
compute the $k$th eigenvalue $\lambda_k(A)$ of the tridiagonal matrix $A$ with entries $a,b$
\STATE
put $\onum{\lambda}_k\coloneqq\lambda_k(A)$
\ENDFOR
\RETURN $\onum{\lambda}_1,\dots,\onum{\lambda}_n$
\end{algorithmic}
\end{algorithm}

As a side effect, we have the following interesting property.

\begin{proposition}\label{propCard}
$\onum{\lambda}_k$ is attained for $a_i\coloneqq\ovr{a}_i$ and $b_i\in\{\unum{b}_i,\onum{b}_i\}$ such that the cardinality of 
$$
\{i=2,\dots,n\mmid b_i=\onum{b}_i\}
$$
is $n-k$.
\end{proposition}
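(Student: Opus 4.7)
The plan is to combine Proposition~\ref{propAttain} with a classical fact about eigenvectors of (unreduced) Jacobi matrices, namely that the eigenvector associated with the $k$th largest eigenvalue has exactly $k-1$ sign changes in consecutive entries. This immediately converts the statement about which $b_i$ are at their upper versus lower endpoint into a counting statement about sign changes.

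First I would observe that $\Mid{A}$ is an unreduced Jacobi matrix. Indeed, by the reductions in Section~\ref{sPre} we may assume $\unum{b}_i \geq 0$, and by the standing assumption $\onum{b}_i > 0$; hence the off-diagonal midpoints $\Mid{b}_i = \tfrac{1}{2}(\unum{b}_i + \onum{b}_i)$ are strictly positive. This ensures two things I need: its eigenvalues are simple (consistent with Proposition~\ref{propSimple} applied to $\Mid{A}$) and, more importantly, every eigenvector has \emph{no zero entries}, so that the sign comparison in Proposition~\ref{propAttain} is unambiguous.

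Next I would invoke the classical oscillation/Sturm theorem for Jacobi matrices (e.g., \cite{Par1998}): if the eigenvalues are ordered $\lambda_1 > \lambda_2 > \dots > \lambda_n$ and $x^k$ denotes the eigenvector associated with $\lambda_k$, then $x^k$ exhibits exactly $k-1$ sign changes among its consecutive entries $x^k_1,\ldots,x^k_n$. In particular $x^1$ is the Perron-type eigenvector with all entries of the same sign, while the eigenvector for the smallest eigenvalue alternates in sign.

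Finally I would plug this into Proposition~\ref{propAttain}. That proposition assigns $b_i \coloneqq \onum{b}_i$ exactly at those indices where the corresponding pair of consecutive entries of the eigenvector $x$ of $\Mid{A}$ share a sign, and $b_i \coloneqq \unum{b}_i$ at the other indices. There are $n-1$ consecutive pairs in total, and by the oscillation theorem $k-1$ of them are sign changes. Hence the number of indices with $b_i = \onum{b}_i$ is $(n-1)-(k-1) = n-k$, which is what the proposition claims. I don't anticipate any genuine obstacle here: the only subtle point is justifying that the oscillation theorem applies, which just needs $\Mid{A}$ to be unreduced — precisely what the paper's setup guarantees.
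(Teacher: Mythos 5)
Your proof is correct and takes essentially the same route as the paper: the paper derives the sign pattern of the $k$th eigenvector from Parlett's Sturm-sequence theorem (the sign of $x_j$ matches that of $\chi_{j-1}(\lambda_k)$, with $n-k$ consecutive sign agreements), which is precisely the oscillation theorem you invoke as a black box, and both arguments then count the sign agreements to conclude via Proposition~\ref{propAttain} that exactly $n-k$ of the $b_i$ sit at $\onum{b}_i$. A small bonus of your phrasing is that applying the theorem directly to $\Mid{A}$, whose off-diagonal entries are positive whenever $\onum{b}_i>0$, sidesteps the paper's auxiliary assumption $\uvr{b}>0$ and the limit transition used to remove it.
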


\begin{proof}
Suppose $\uvr{b}>0$; the general case then follows from limit transition due to continuity of eigenvalues.
Let $A\in\imace{A}$, let $\lambda_k$ be its $k$th eigenvalue and $x$ a corresponding eigenvector. By \cite[Thm.~7.9.2]{Par1998}, the sign of $x_j$ is equal to the sign of
$$
\chi_{j-1}(\lambda_k)b_{j+1}\dots b_n,
$$
where $\chi_{j-1}$ is the characteristic polynomial of the (top left) principal leading submatrix of $A$ of size $j-1$, and $\chi_{0}\equiv 1$. Since $b>0$, the signs of $x_j$ and $\chi_{j-1}(\lambda_k)$ coincide. The number of sign agreements between consecutive terms in the Sturm sequence $\{\chi_i(\lambda_k)\mmid i=0,1,\dots,n\}$ gives the number of roots of $\chi_n$ which are less than $\lambda_k$, that is $n-k$. Sign agreement between consecutive terms in the Sturm sequence corresponds to sign agreement between consecutive terms in the signs of the eigenvector $x$, which in turn sets $b_i$ to be $\onum{b}_i$ by Proposition~\ref{propAttain}.
Therefore, by the analysis of our method, $n-k$ is equal to the number of $b_i$, $i=2,\dots,n$, that we set to the right end-point.
\end{proof}



Time complexity of our algorithm is the following. We need computation of eigenvalues and eigenvectors of the midpoint matrix $\Mid{A}$, then $2n$-times computation of a certain eigenvalue of a matrix in $\imace{A}$. The preprocessing carrying the matrix $\imace{A}$ to the nonnegative form (Section~\ref{sPre}) requires only linear time. Provided we employ a standard method for computation of eigenvalues of a real symmetric tridiagonal matrix running in $\mna{O}(n^2)$, the overall complexity is $\mna{O}(n^3)$.

\section{General case}

As a simple corollary of Proposition~\ref{propCard} we get that $\onum{\lambda}_1$ is attained for $b\coloneqq\ovr{b}$ and $\onum{\lambda}_n$ is attained for $b\coloneqq\uvr{b}$. This property, however, holds in the general case and no sign invariancy assumption is needed. By other means, this was observed by Jian \cite{Jia2017}.

\begin{proposition}
$\onum{\lambda}_1$ and $\unum{\lambda}_n$ are attained for $b\coloneqq\ovr{b}$, and $\unum{\lambda}_1$ and $\onum{\lambda}_n$ are attained for $b\coloneqq\uvr{b}$.
\end{proposition}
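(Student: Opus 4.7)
The plan is to exploit the standing reduction to the nonnegative case ($\umace{A}\geq 0$) so that every $A\in\imace{A}$ is a nonnegative symmetric matrix, and then invoke a Perron--Frobenius style monotonicity for the top eigenvalue. The key preliminary observation is that, for any nonnegative symmetric matrix $A$,
$$
\lambda_1(A)=\max_{\|x\|_2=1} x^TAx=\max_{x\geq 0,\,\|x\|_2=1} x^TAx,
$$
because $x^TAx\leq |x|^TA\,|x|$ when $A\geq 0$. Consequently, if $A,A'\in\imace{A}$ satisfy $A\leq A'$ entrywise and $y\geq 0$ is a unit maximizer for $A$, then $\lambda_1(A)=y^TAy\leq y^TA'y\leq \lambda_1(A')$. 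Thus $\lambda_1(\cdot)$ is entrywise monotone on $\imace{A}$.

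The claims for $\onum{\lambda}_1$ and $\unum{\lambda}_1$ now follow directly: the maximum of $\lambda_1$ over $\imace{A}$ is attained at $b\coloneqq\ovr{b}$ (together with $a\coloneqq\ovr{a}$) and the minimum at $b\coloneqq\uvr{b}$ (together with $a\coloneqq\uvr{a}$).

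For $\onum{\lambda}_n$ and $\unum{\lambda}_n$ I would reduce to the largest-eigenvalue case by combining a signature flip with a spectral shift. Let $D=\diag(1,-1,1,\dots,(-1)^{n-1})$. Since $A$ is tridiagonal, $DAD$ has the same diagonal as $A$ but its off-diagonal entries are negated, so $A$ and $DAD$ share spectra. Choose any $c\geq\max_i\ovr{a}_i$; then, for every $A\in\imace{A}$, the matrix
$$
M(A)\coloneqq cI_n-DAD
$$
is nonnegative symmetric tridiagonal with diagonal entries $c-a_i$ and off-diagonal entries $b_2,\dots,b_n$, and $\lambda_n(A)=\lambda_n(DAD)=c-\lambda_1(M(A))$. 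Crucially, as $A$ ranges over $\imace{A}$ the matrix $M(A)$ ranges over a nonnegative interval matrix in which the $b_i$ still range over the original intervals $\inum{b}_i$ (only the diagonal gets transformed). Applying the monotonicity fact to $M(A)$ shows $\lambda_1(M(A))$ is maximized at $b=\ovr{b}$ and minimized at $b=\uvr{b}$; converting back via $\lambda_n(A)=c-\lambda_1(M(A))$ swaps max and min, yielding $\unum{\lambda}_n$ at $b=\ovr{b}$ and $\onum{\lambda}_n$ at $b=\uvr{b}$.

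I do not expect a real obstacle. The only point that needs a little care is checking that the conjugation by $D$ really preserves the tridiagonal structure and maps the interval-matrix family of matrices bijectively onto another interval-matrix family in which the $b_i$-intervals are unchanged, so that the monotonicity argument applied to $M(A)$ transplants cleanly to $\lambda_n(A)$. Choosing the uniform shift $c$ and extracting a nonnegative Perron maximizer are routine.
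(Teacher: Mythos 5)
Your proof is correct, and it takes a genuinely different route from the paper. The paper leans on its sign-invariancy machinery: since every $A\in\imace{A}$ is nonnegative, the eigenvector of $\lambda_1(A)$ is the nonnegative Perron vector, hence sign invariant, and Proposition~\ref{propCard} (itself built on the eigenvalue-derivative argument of Proposition~\ref{propAttain}, proved under $\uvr{b}>0$ plus a limit transition) then forces $b\coloneqq\ovr{b}$ for $\onum{\lambda}_1$ and, after $A\mapsto-A$, $b\coloneqq\uvr{b}$ for $\onum{\lambda}_n$. You instead prove entrywise monotonicity of $\lambda_1$ on nonnegative symmetric matrices directly from the Rayleigh quotient ($x^TAx\le|x|^TA|x|$, so a nonnegative maximizer exists, and $y^TAy\le y^TA'y\le\lambda_1(A')$ when $A\le A'$), which immediately settles $\onum{\lambda}_1$ and $\unum{\lambda}_1$; for the smallest eigenvalue you make explicit the reduction the paper only gestures at, via $D=\diag(1,-1,\dots)$ and the shift $cI_n-DAD$, correctly noting that this similarity-plus-shift preserves the tridiagonal structure, leaves the $\inum{b}_i$ intervals intact, reverses the diagonal intervals, and turns $\lambda_n$ into $c-\lambda_1$, so max/min swap as claimed. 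What your argument buys is self-containedness and generality: it needs no simplicity of eigenvalues, no eigenvector derivatives, no sign-invariancy framework, and no $\uvr{b}>0$ assumption or continuity/limit step, and as a bonus it also pins down the optimal diagonal choices ($a=\ovr{a}$ resp.\ $a=\uvr{a}$). What the paper's proof buys is brevity within its established framework and a direct link to the attainment pattern of Proposition~\ref{propCard}, which it reuses elsewhere.
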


\begin{proof}
Since $A\geq0$ for every $A\in\imace{A}$, the eigenvector associated to $\lambda_1(A)$ is the nonnegative Perron vector. Therefore, the eigenvector is always sign invariant and Proposition~\ref{propCard} applies. Similarly for $\lambda_n(A)$ by the transformation $A\mapsto-A$.
\end{proof}

Each of the quantities $\onum{\lambda}_1$, $\unum{\lambda}_1$, $\onum{\lambda}_n$ and $\unum{\lambda}_n$ are computable just by solving one real eigenvalue problem. 
As a consequence, we have a $\mna{O}(n^2)$ method for testing the following properties of a symmetric tridiagonal interval matrix $\imace{A}$, because they reduce to computation of eigenvalues of one or two real symmetric tridiagonal matrices:
\begin{itemize}
\item
positive (semi)-definiteness, i.e., whether each $A\in\imace{A}$ is positive (semi)-definite; one has to check $\unum{\lambda}_n>0$ or $\unum{\lambda}_n\geq0$, respectively
\item
Schur or Hurwitz stability, i.e., whether each $A\in\imace{A}$ is stable; for Schur stability, one has to check $\unum{\lambda}_n>-1$ and $\onum{\lambda}_1<1$, and for Hurwitz stability $\onum{\lambda}_1<0$
\item
spectral radius, i.e., the largest spectral radius over $A\in\imace{A}$; it has the value of $\max\{\onum{\lambda}_1,-\unum{\lambda}_n\}$.
\end{itemize}

\section{Checking sign invariancy}

Recall Theorem 7.9.3 from Parlett \cite{Par1998} stated in an adapted formulation.

\begin{theorem}\label{thmEigvecExtr}
If $A\in\imace{A}$ with $b>0$, then there is no eigenvector $x$ such that $x_1=0$ or $x_n=0$.
\end{theorem}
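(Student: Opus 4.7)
The plan is to argue directly from the three-term recurrence satisfied by any eigenvector of a symmetric tridiagonal matrix. Write $Ax=\lambda x$ row by row: the first equation reads $a_1 x_1 + b_2 x_2 = \lambda x_1$, the generic interior equation reads $b_i x_{i-1} + a_i x_i + b_{i+1} x_{i+1} = \lambda x_i$ for $2 \le i \le n-1$, and the last equation reads $b_n x_{n-1} + a_n x_n = \lambda x_n$. Since $b_i > 0$ for every $i=2,\dots,n$ by assumption, each of these relations lets us solve for one entry of $x$ in terms of the others.

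First suppose $x_1=0$. The first row immediately yields $b_2 x_2 = 0$, hence $x_2=0$. Substituting $x_1=x_2=0$ into the second row gives $b_3 x_3=0$, hence $x_3=0$. Proceeding by induction on $i$: if $x_1=\cdots=x_i=0$ with $i<n$, then row $i$ reduces to $b_{i+1}x_{i+1}=0$, so $x_{i+1}=0$. Consequently $x=0$, contradicting the fact that $x$ is an eigenvector. The case $x_n=0$ is completely symmetric: the last row gives $b_n x_{n-1}=0$, hence $x_{n-1}=0$, and a descending induction using the successive rows propagates the zero upward until $x=0$.

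There is no real obstacle here beyond bookkeeping, since the argument is just a unilateral cascade enabled by the strict positivity of every off-diagonal entry. The only thing to be mildly careful about is the boundary indexing, i.e.\ treating the first and last rows as the base cases of the two respective inductions (they lack one of the neighboring $b$-terms), and noting that the assumption $b>0$ is used at \emph{every} inductive step to divide out the appropriate $b_{i+1}$ or $b_i$.
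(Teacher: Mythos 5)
Your argument is correct: the cascade $x_1=0\Rightarrow x_2=0\Rightarrow\cdots\Rightarrow x=0$ (and its mirror image starting from $x_n=0$) is valid, each step using exactly the strict positivity $b_{i+1}>0$ (indeed only $b_{i+1}\neq 0$ is needed) to divide out the off-diagonal coefficient, and the boundary rows are handled properly as base cases. The paper itself gives no proof here: the statement is simply recalled as Theorem~7.9.3 of Parlett \cite{Par1998} in an adapted formulation, so there is nothing in the text to compare step by step. Your recurrence argument is the standard elementary proof underlying that cited result, and its inclusion would make the statement self-contained; it also makes visible that the hypothesis can be weakened from $b>0$ to $b_i\neq 0$ for all $i$, which is all the paper's later uses (Propositions~\ref{propEigvecZerosIn} and~\ref{propEigComm}) actually require, although there the normalization $\umace{A}\geq 0$ makes the two formulations equivalent.
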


We can utilize this theorem also for the intermediate entries of eigenvectors.

\begin{proposition}\label{propEigvecZerosIn}
If there is $A\in\imace{A}$ with $b>0$, and $x_i=0$ for some eigenvector $x$, then $x_{i-1},x_{i+1}\not=0$.
\end{proposition}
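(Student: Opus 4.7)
The plan is to argue by contradiction, exploiting the fact that the tridiagonal eigenvalue recurrence at row $j$ only couples $x_{j-1}, x_j, x_{j+1}$. So a pair of consecutive zeros in $x$ forces, via the equation at the adjacent row (and the positivity of the corresponding $b$-entry), a third consecutive zero; iterating this propagation pushes a zero all the way to one of the endpoints $x_1$ or $x_n$, which is forbidden by Theorem~\ref{thmEigvecExtr}.

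Concretely, suppose for contradiction that $x_i = 0$ and, say, $x_{i-1} = 0$. The eigenvalue equation $Ax = \lambda x$ at row $i$ reads
\begin{align*}
b_i x_{i-1} + a_i x_i + b_{i+1} x_{i+1} = \lambda x_i,
\end{align*}
which under our assumption collapses to $b_{i+1} x_{i+1} = 0$. Since $b_{i+1} > 0$, we conclude $x_{i+1} = 0$. Now $x_i = x_{i+1} = 0$, and reading the equation at row $i+1$ yields $b_{i+2} x_{i+2} = 0$, hence $x_{i+2} = 0$. Iterating this forward propagation step produces $x_j = 0$ for all $j \geq i-1$; in particular $x_n = 0$, contradicting Theorem~\ref{thmEigvecExtr}.

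The symmetric case, where $x_i = 0$ and $x_{i+1} = 0$, is handled identically by propagating to the left: the equation at row $i$ now gives $b_i x_{i-1} = 0$, hence $x_{i-1} = 0$, and continuing yields $x_1 = 0$, again contradicting Theorem~\ref{thmEigvecExtr}.

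I do not foresee any real obstacle: the argument is essentially a one-line cascade driven by the tridiagonal structure, and the hypothesis $b > 0$ is used in exactly the right place to divide out $b_{i+1}$ (resp.\ $b_i$) when passing from the eigenvector equation to the propagated zero. The only point to keep straight is the range of indices, namely that the propagation is guaranteed to reach $x_1$ or $x_n$, where Theorem~\ref{thmEigvecExtr} provides the contradiction.
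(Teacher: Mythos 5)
Your proof is correct, but it takes a different route than the paper's. The paper argues in one step: since $x_i=0$, the equations $1,\dots,i-1$ of $Ax=\lambda x$ show that the truncation $(x_1,\dots,x_{i-1})^T$ satisfies the eigenvector equation for the leading principal submatrix $A_{[1:(i-1)]}$, which is again an unreduced symmetric tridiagonal matrix, so Theorem~\ref{thmEigvecExtr} applied to \emph{that submatrix} yields $x_{i-1}\neq0$ (and symmetrically, the trailing submatrix $A_{[(i+1):n]}$ yields $x_{i+1}\neq0$). You instead run the three-term recurrence directly: two consecutive zeros, together with positivity of the relevant $b$'s, propagate a zero entry all the way to $x_n$ (or $x_1$) of the \emph{full} matrix, where Theorem~\ref{thmEigvecExtr} supplies the contradiction. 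Both arguments are sound and short. The paper's version is a clean reduction but tacitly requires the truncated vector to be nonzero in order to call it an eigenvector --- a point your cascade in effect supplies; your version is more elementary at the level of the recurrence, and if you propagated in both directions you would get $x=0$ outright, so the appeal to Theorem~\ref{thmEigvecExtr} could even be dispensed with. The only housekeeping left implicit in your write-up is the index range: $i=1$ or $i=n$ cannot occur under the hypotheses (the theorem already forbids $x_1=0$ and $x_n=0$), and at the boundary rows the recurrence has only two terms, so the propagation terminates exactly where you need it to.
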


\begin{proof}
From $x_i=0$ we have that $(x_1,\dots,x_{i-1})^T$ is an eigenvector of the  principal leading submatrix of $\imace{A}$ of size $i-1$, and therefore $x_{i-1}\not=0$. Similarly for $x_{i+1}$.
\end{proof}

The following observation is a basis for the method recognizing sign invariancy.
We will denote by $A_{[i:j]}$ the principal submatrix of $A$ indexed by $i,i+1,\dots,j$.

\begin{proposition}\label{propEigComm}
Suppose that $\uvr{b}>0$. Then the problem is not sign invariant if and only if there is $A\in\imace{A}$ and $i\in\seznam{n}$ such that both matrices $A_{[1:(i-1)]}$ and $A_{[(i+1):n]}$ share a common eigenvalue.
\end{proposition}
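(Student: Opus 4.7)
The plan is to combine two ingredients: (i) continuity of each simple eigenpair on the convex box $\imace{A}$ (which holds under $\uvr{b}>0$ by Proposition~\ref{propSimple}), and (ii) the Sturm-type identity already used in the proof of Proposition~\ref{propCard}, namely $\sgn(x^k_j(A))=\sgn(\chi_{j-1}(\lambda_k(A)))$, where $\chi_{j-1}$ is the characteristic polynomial of the leading principal submatrix $A_{[1:(j-1)]}$.

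For ($\Rightarrow$), I suppose the problem is not sign invariant, so under a consistent normalization of eigenvectors some entry $x^k_j$ attains opposite signs at two matrices $A_1,A_2\in\imace{A}$. Continuity along the line segment joining $A_1$ and $A_2$ (which remains in $\imace{A}$) then forces, via the intermediate value theorem, a matrix $A\in\imace{A}$ with $x^k_j(A)=0$. Reading the eigen-equation $Ax^k=\lambda_k(A)x^k$ row-by-row above and below row $j$, and invoking Proposition~\ref{propEigvecZerosIn} to ensure both halves of $x^k$ are nonzero, I then identify $\lambda_k(A)$ as a common eigenvalue of $A_{[1:(j-1)]}$ and $A_{[(j+1):n]}$, which is the required property with $i\coloneqq j$.

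For ($\Leftarrow$), given $A\in\imace{A}$ and $i$ such that $A_{[1:(i-1)]}$ and $A_{[(i+1):n]}$ share an eigenvalue $\lambda$ with eigenvectors $u$ and $v$, I first build an eigenvector of $A$ with zero $i$-th coordinate: by Theorem~\ref{thmEigvecExtr} applied to each sub-block, $u_{i-1}\neq 0$ and $v_{i+1}\neq 0$, so setting $c\coloneqq -b_iu_{i-1}/(b_{i+1}v_{i+1})$ the concatenation $x\coloneqq(u_1,\dots,u_{i-1},0,cv_{i+1},\dots,cv_n)^T$ is easily checked to solve $Ax=\lambda x$ (only row $i$ is nontrivial, and it holds by the definition of $c$). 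Hence $\lambda=\lambda_k(A)$ for some $k$ and $x^k_i(A)=0$. To conclude that sign invariance actually fails, I perturb the entry $a_{i+1}$ within $\imace{A}$: the derivative $\partial\lambda_k/\partial a_{i+1}=(x^k_{i+1})^2>0$ is nonzero, while $\chi_{i-1}$ does not depend on $a_{i+1}$ and admits $\lambda$ as a simple root (because $A_{[1:(i-1)]}$ itself has simple spectrum under $\uvr{b}>0$). Consequently, $\chi_{i-1}(\lambda_k(A))$ strictly changes sign across the current value of $a_{i+1}$, so $\sgn(x^k_i)$ cannot be constant on $\imace{A}$.

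The main obstacle I anticipate is the boundary subcase where no bidirectional perturbation of $a_{i+1}$ is available (for instance when the value at $A$ coincides with $\unum{a}_{i+1}$ or $\onum{a}_{i+1}$). I would handle it by rerunning the same first-order argument with any other entry whose partial derivative of $\lambda_k$ is nonzero, for example $a_j$ with $j\neq i$ (derivative $(x^k_j)^2>0$) or $b_j$ with $j\notin\{i,i+1\}$ (derivative $2x^k_{j-1}x^k_j\neq 0$); in the fully degenerate case in which $\imace{A}$ admits no such perturbation at $A$, the identity $x^k_i\equiv 0$ on $\imace{A}$ already witnesses failure of a strict constant sign for this entry.
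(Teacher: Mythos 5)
Your proof is correct and its skeleton coincides with the paper's: under $\uvr{b}>0$ the eigenvalues are simple (Proposition~\ref{propSimple}), eigenpairs can be chosen continuously, so failure of sign invariancy reduces to the existence of an eigenvector with a zero entry (your IVT step along a segment of the box), and the equivalence with a common eigenvalue of $A_{[1:(i-1)]}$ and $A_{[(i+1):n]}$ is obtained by splitting, respectively gluing, the eigenvector across the zero coordinate --- your constant $c=-b_iu_{i-1}/(b_{i+1}v_{i+1})$ is exactly the paper's $\alpha$, and the nonvanishing of the end entries is justified the same way (Theorem~\ref{thmEigvecExtr}, Proposition~\ref{propEigvecZerosIn}). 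Where you go beyond the paper is the ($\Leftarrow$) direction: the paper stops as soon as it has an eigenvector of some $A\in\imace{A}$ with $x_i=0$, implicitly treating \quo{zero entry} as synonymous with \quo{not sign invariant}; you add a first-order argument (the derivative $(x^k_{i+1})^2>0$ of the simple eigenvalue, combined with the Sturm-sequence sign identity, with $\chi_{i-1}$ independent of $a_{i+1}$ and $\lambda$ a simple root of $\chi_{i-1}$ because $A_{[1:(i-1)]}$ is unreduced) showing that the sign of $x^k_i$ genuinely flips inside the box. This buys a proof that survives even under the stricter reading in which a persistent zero sign would not by itself violate invariancy, at the cost of the boundary discussion you append. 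Two small caveats there: in your fallback list, perturbing $a_j$ or $b_j$ only works when the relevant entries $x^k_j$ (respectively $x^k_{j-1}x^k_j$) are nonzero, which is guaranteed only for indices adjacent to $i$ (Proposition~\ref{propEigvecZerosIn}) or for $j\in\{1,n\}$ (Theorem~\ref{thmEigvecExtr}), so you should pick $j=i\pm1$ explicitly rather than an arbitrary $j\neq i$; and in the fully degenerate case your conclusion, like the paper's, ultimately rests on the convention that an eigenvector zero entry counts as a violation of sign invariancy.
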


\begin{proof}
Since $\uvr{b}>0$, by Proposition~\ref{propSimple} the eigenvalues of all $A\in\imace{A}$ are simple, and therefore the corresponding eigenvectors $x(A)$ can be chosen in such a way that they constitute continuous mappings with respect to $A\in\imace{A}$.
Thus the problem is not sign invariant if and only if there an eigenvector with zero entry.

Let $\lambda$ be the eigenvalue corresponding to an eigenvector $x$. If $x_i=0$, then both matrices $A_{[1:(i-1)]}$ and $A_{[(i+1):n]}$ have a common eigenvalue $\lambda$, and the eigenvectors are $(x_1,\dots,x_{i-1})^T$ and $(x_{i+1},\dots,x_n)^T$, respectively.

On the other hand, let $A_{[1:(i-1)]}$ and $A_{[(i+1):n]}$ have a common eigenvalue $\lambda$ corresponding to eigenvectors $(x_1,\dots,x_{i-1})^T$ and $(x_{i+1},\dots,x_n)^T$, respectively. Then $x_1,x_{i-1},x_{i+1},x_n\not=0$ by Theorem~\ref{thmEigvecExtr}, and therefore $\lambda$ is the eigenvalue of $A$ corresponding to the eigenvector $(x_1,\dots,x_{i-1},0,\alpha x_{i+1},\dots,\alpha x_n)^T$ for some $\alpha\not=0$.
\end{proof}

\paragraph{The method for checking sign invariancy}
For any $I\subseteq\seznam{n}$ do the following.
The index set $I=\{i_1,\dots,i_k\}$ represents zero entries of an eigenvector. Consider the interval principal submatrices $\imace{A}_{[1:(i_1-1)]},\dots,\imace{A}_{[(i_k+1):n]}$ associated with $I$. Compute their inner estimation eigenvalue intervals by Remark~\ref{remInner}. If there is a common value $\lambda$, then the problem is not sign invariant by Proposition~\ref{propEigComm}.

If the test passes successfully through every $I\subseteq\seznam{n}$, then the problem is sign invariant. The reason is the following. Let $x$ be an eigenvector of any $A\in\imace{A}$ with the most zero entries. Let $I$ be the index set of the zero entries. Then the problem becomes sign invariant on principal submatrices $\imace{A}_{[1:(i_1-1)]},\dots,\imace{A}_{[(i_k+1):n]}$, and therefore we must find a common eigenvalue.

Notice that not all the number $2^n$ of index sets $I\subseteq\seznam{n}$ are necessary to process. By Theorem~\ref{thmEigvecExtr} and Proposition~\ref{propEigvecZerosIn} only certain index sets can be considered. What is the number of such sets? Denote it by $p_n$. We easily find a Fibonacci-type recurrence relation $p_n=p_{n-1}+p_{n-2}$ since either $1,3\not\in I$, $2\in I$, or $1,2\not\in I$. Therefore $p_n$ asymptotically grows as $p_n\approx 1.618^n$, which is still exponential, but significantly less than $2^n$.

The following gives a sufficient condition for sign invariancy.
Denote by $\Lambda(\imace{A})$ any superset of the eigenvalue sets of $\imace{A}$, that is,
$$
\cup_{i=1}^n \inum{\lambda}_i\subseteq \Lambda(\imace{A}).
$$
Methods for computing such outer estimations were addressed, e.g., in \cite{HlaDan2010,HlaDan2011b,Kol2010,Len2014,QiuChe1996}.

\begin{proposition}\label{propInvSuffOuter}
The problem is sign invariant if $\uvr{b}>0$ and $\Lambda(\imace{A}_{[1:(i-1)]}) \cap \Lambda(\imace{A}_{[(i+1):n]})=\emptyset$ for every $k=2,\dots,n-1$.
\end{proposition}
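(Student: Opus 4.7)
My plan is to prove the contrapositive directly from Proposition~\ref{propEigComm}: if the hypothesis holds but the problem fails to be sign invariant, I will extract a common eigenvalue of two principal submatrices and show it must lie in the intersection of the corresponding outer estimations, contradicting the assumption.

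Concretely, I will assume that the problem is not sign invariant. Since $\uvr{b}>0$, Proposition~\ref{propEigComm} gives me some $A\in\imace{A}$ and some index $i\in\{1,\dots,n\}$ such that $A_{[1:(i-1)]}$ and $A_{[(i+1):n]}$ have a common eigenvalue $\lambda$. I would first observe that the boundary cases $i=1$ and $i=n$ are vacuous: one of the submatrices is then empty and has no eigenvalues at all, so the common eigenvalue cannot exist. Thus we necessarily have $i\in\{2,\dots,n-1\}$, which is exactly the range covered by the hypothesis.

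Next, I would unfold the definition of $\Lambda$. The real matrix $A_{[1:(i-1)]}$ is a member of the interval submatrix $\imace{A}_{[1:(i-1)]}$, so $\lambda$ is one of its eigenvalues and hence $\lambda\in\inum{\lambda}_j(\imace{A}_{[1:(i-1)]})$ for some $j$; by the defining inclusion $\cup_j \inum{\lambda}_j(\imace{A}_{[1:(i-1)]})\subseteq \Lambda(\imace{A}_{[1:(i-1)]})$ this gives $\lambda\in \Lambda(\imace{A}_{[1:(i-1)]})$. An identical argument applied to the other block produces $\lambda\in \Lambda(\imace{A}_{[(i+1):n]})$. Therefore
\begin{align*}
\lambda\in \Lambda(\imace{A}_{[1:(i-1)]})\cap \Lambda(\imace{A}_{[(i+1):n]}),
\end{align*}
contradicting the assumption that this intersection is empty.

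There is no real obstacle in this argument: the substance has already been done in Proposition~\ref{propEigComm} and in the definition of the outer enclosure $\Lambda$. The only step needing a bit of care is the boundary discussion of $i=1,n$, to make sure the proposition's index range $i=2,\dots,n-1$ actually captures all cases that can arise from Proposition~\ref{propEigComm}; this is immediate from the convention that empty matrices have no eigenvalues.
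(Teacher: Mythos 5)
Your argument is correct and is exactly the route the paper takes: its proof consists of the single line \emph{\quo{It follows from Proposition~\ref{propEigComm}}}, and your contrapositive write-up simply fills in the details (membership of the common eigenvalue in both outer enclosures $\Lambda$, plus the harmless boundary cases $i=1,n$). No gap; your handling of the index range even quietly corrects the statement's typo of writing $k$ where $i$ is meant.
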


\begin{proof}
It follows from Proposition~\ref{propEigComm}.
\end{proof}

\section{Special case of disjoint eigenvalue sets}

An interval matrix $\imace{A}$ is called regular if every $A\in\imace{A}$ is nonsingular; see \cite{Roh2009}. Bar-On et al.\ \cite{Bar2000,BarCod1996} showed that checking regularity of a tridiagonal interval matrix is a polynomial problem. Their algorithm works analogously even if we restrict to symmetric matrices in $\imace{A}$. Therefore, checking regularity of a tridiagonal interval matrix can be checked efficiently. As a direct consequence, checking whether a given $\lambda\in\R$ is an eigenvalue of at least one $A\in\imace{A}$ is a polynomially solvable problem, too.

We use this observation for computing the corresponding eigenvalue sets in the case the eigenvalue sets $\inum{\lambda}_1,\dots,\inum{\lambda}_n$ are mutually disjoint. Compute the inner estimation $\inum{\mu}_1,\dots,\inum{\mu}_n$ of the eigenvalue sets by Remark~\ref{remInner}. If these intervals are mutually disjoint, then for each $i=2,\dots,n-1$ check by the above observation whether $\unum{\mu}_i-\eps$ or $\onum{\mu}_i+\eps$ belong to the eigenvalue set for a sufficiently small $\eps>0$. (To avoid numerical difficulties, one can consider $\eps$ as a parameter.) If it is not the case, then empty pairwise intersection of the eigenvalue sets is confirmed. Eventually, we have $\inum{\lambda}_i=\inum{\mu}_i$ for every $i=1,\dots,n$.

\section{Examples}

\begin{example}
Consider the example from \cite{HlaDan2010,HlaDan2011b,Len2014,QiuChe1996}:
\begin{align*}
\imace{A}=\begin{pmatrix}
[2975,3025]& [-2015,-1985]& 0&0\\
[-2015,-1985]&[4965,5035]& [-3020,-2980]& 0\\
0& [-3020,-2980] &[6955,7045]& [-4025,-3975]\\
0 &0& [-4025,-3975]& [8945,9055]\end{pmatrix}.
\end{align*}
First, we transform the matrix into a nonnegative one
\begin{align*}
\imace{A}'=\begin{pmatrix}
[2975,3025]& [1985,2015]& 0&0\\
[1985,2015]&[4965,5035]& [2980,3020]& 0\\
0& [2980,3020] &[6955,7045]& [3975,4025]\\
0 &0& [3975,4025]& [8945,9055]\end{pmatrix}.
\end{align*}
The eigenvalues of the midpoint matrix are $\lambda_1=12641$, $\lambda_2=7064.5$, $\lambda_3=3389.9$, $\lambda_4=905.17$, and the corresponding eigenvectors are
\begin{align*}
v_1&=(0.05575,0.26874,0.64725,0.71116)^T,\  &
v_2&=(-0.3546,-0.7206,-0.2595,0.5363)^T,\\
v_3&=(0.71884,0.14012,-0.55442,0.39531)^T,\  &
v_4&=(0.59535,-0.62357,0.45425,-0.22446)^T.
\end{align*}
Based on the signs of the entries of these vectors we can directly conclude that $\onum{\lambda}_1$ is attained for $\omace{A}$, and similarly $\onum{\lambda}_2$, $\onum{\lambda}_3$, $\onum{\lambda}_4$ are attained as the corresponding eigenvalues of the matrices
\begin{align*}
\begin{pmatrix}
 3025 & 2015 &    0 & 0\\
 2015 & 5035 & 3020 & 0\\
    0 & 3020 & 7045 & 3975\\
    0 &    0 & 3975 & 9055
\end{pmatrix},\ 
\begin{pmatrix}
 3025 & 2015 &    0 & 0\\
 2015 & 5035 & 2980 & 0\\
    0 & 2980 & 7045 & 3975\\
    0 &    0 & 3975 & 9055
\end{pmatrix},\ 
\begin{pmatrix}
 3025 & 1985 &    0 & 0\\
 1985 & 5035 & 2980 & 0\\
    0 & 2980 & 7045 & 3975\\
    0 &    0 & 3975 & 9055
\end{pmatrix},
\end{align*}
respectively. Similarly we proceed for calculating the lower end-points of the eigenvalue sets. Eventually, we obtain the following exact eigenvalue sets (by using outward rounding)
\begin{align*}
\inum{\lambda}_1&=[12560.8377,\,12720.2273],&\ \ 
\inum{\lambda}_2&=[7002.2827,\,7126.8283],\\ 
\inum{\lambda}_3&=[3337.0784,\,3443.3128],&\ \ 
\inum{\lambda}_4&=[842.9250,\,967.1083].
\end{align*}
\end{example}

\section{Conclusion}

We presented a simple and fast algorithm for computing the eigenvalue ranges of symmetric tridiagonal interval matrices. Impreciseness of measurement and other kinds of uncertainty are often represented in the form of intervals. Therefore, checking various kinds of stability of uncertain systems naturally leads to the problem of determining eigenvalues of interval matrices. In this short note, we improved the time complexity and the overall exposition of the known methods for the symmetric tridiagonal matrix case.

\section*{Acknowledgments} 

The author was supported by the Czech Science Foundation Grant P402/13-10660S.


\bibliographystyle{abbrv}
\bibliography{tridiag}

\begin{thebibliography}{10}

\bibitem{AhnMoor2006}
H.-S. Ahn, K.~L. Moore, and Y.~Chen.
\newblock Monotonic convergent iterative learning controller design based on
  interval model conversion.
\newblock {\em IEEE Trans. Autom. Control}, 51(2):366--371, 2006.

\bibitem{Bar2000}
I.~Bar-On.
\newblock Checking nonsingularity of tridiagonal matrices.
\newblock {\em Electron. J. Linear Algebra}, 6:11--19, 2000.

\bibitem{BarCod1996}
I.~Bar-On, B.~Codenotti, and M.~Leoncini.
\newblock Checking robust nonsingularity of tridiagonal matrices in linear
  time.
\newblock {\em BIT}, 36(2):206--220, 1996.

\bibitem{Com1994}
J.~C. Commer\c{c}on.
\newblock Eigenvalues of tridiagonal symmetric interval matrices.
\newblock {\em IEEE Trans. Autom. Control}, 39(2):377--379, 1994.

\bibitem{DeiRoh1994}
A.~Deif and J.~Rohn.
\newblock On the invariance of the sign pattern of matrix eigenvectors under
  perturbation.
\newblock {\em Linear Algebra Appl.}, 196:63--70, 1994.

\bibitem{Dei1991a}
A.~S. Deif.
\newblock The interval eigenvalue problem.
\newblock {\em ZAMM, Z. Angew. Math. Mech.}, 71(1):61--64, 1991.

\bibitem{Her1992}
D.~Hertz.
\newblock The extreme eigenvalues and stability of real symmetric interval
  matrices.
\newblock {\em IEEE Trans. Autom. Control}, 37(4):532--535, 1992.

\bibitem{Hla2015b}
M.~Hlad\'{\i}k.
\newblock Complexity issues for the symmetric interval eigenvalue problem.
\newblock {\em Open Math.}, 13(1):157--164, 2015.

\bibitem{HlaDan2010}
M.~Hlad\'{\i}k, D.~Daney, and E.~Tsigaridas.
\newblock Bounds on real eigenvalues and singular values of interval matrices.
\newblock {\em SIAM J. Matrix Anal. Appl.}, 31(4):2116--2129, 2010.

\bibitem{HlaDan2011c}
M.~Hlad\'{\i}k, D.~Daney, and E.~P. Tsigaridas.
\newblock Characterizing and approximating eigenvalue sets of symmetric
  interval matrices.
\newblock {\em Comput. Math. Appl.}, 62(8):3152--3163, 2011.

\bibitem{HlaDan2011b}
M.~Hlad\'{\i}k, D.~Daney, and E.~P. Tsigaridas.
\newblock A filtering method for the interval eigenvalue problem.
\newblock {\em Appl. Math. Comput.}, 217(12):5236--5242, 2011.

\bibitem{Jia2017}
Y.~Jian.
\newblock Extremal eigenvalue intervals of symmetric tridiagonal interval
  matrices.
\newblock {\em Numer. Linear Algebra Appl.}, 24(2):e2083, 2017.

\bibitem{Kol2007}
L.~V. Kolev.
\newblock Determining the positive definiteness margin of interval matrices.
\newblock {\em Reliab. Comput.}, 13(6):445--466, 2007.

\bibitem{Kol2010}
L.~V. Kolev.
\newblock Eigenvalue range determination for interval and parametric matrices.
\newblock {\em Int. J. Circuit Theory Appl.}, 38(10):1027--1061, 2010.

\bibitem{Len2014}
H.~Leng.
\newblock Real eigenvalue bounds of standard and generalized real interval
  eigenvalue problems.
\newblock {\em Appl. Math. Comput.}, 232:164--171, 2014.

\bibitem{Par1998}
B.~N. {Parlett}.
\newblock {\em The symmetric eigenvalue problem}.
\newblock SIAM, Philadelphia, unabridged, corrected republication of 1980
  edition, 1998.

\bibitem{QiuChe1996}
Z.~Qiu, S.~Chen, and I.~Elishakoff.
\newblock Bounds of eigenvalues for structures with an interval description of
  uncertain-but-non-random parameters.
\newblock {\em Chaos Soliton. Fract.}, 7(3):425--434, 1996.

\bibitem{Roh1993b}
J.~Rohn.
\newblock Interval matrices: {Singularity} and real eigenvalues.
\newblock {\em SIAM J. Matrix Anal. Appl.}, 14(1):82--91, 1993.

\bibitem{Roh1994}
J.~Rohn.
\newblock Checking positive definiteness or stability of symmetric interval
  matrices is {NP}-hard.
\newblock {\em Commentat. Math. Univ. Carol.}, 35(4):795­--797, 1994.

\bibitem{Roh1996c}
J.~Rohn.
\newblock An algorithm for checking stability of symmetric interval matrices.
\newblock {\em IEEE Trans. Autom. Control}, 41(1):133--136, 1996.

\bibitem{Roh2009}
J.~Rohn.
\newblock Forty necessary and sufficient conditions for regularity of interval
  matrices: {A} survey.
\newblock {\em Electron. J. Linear Algebra}, 18:500--512, 2009.

\end{thebibliography}

\end{document}